\numberwithin{equation}{section}
\newtheorem{thm}{THEOREM}[section]
\newtheorem{prop}[thm]{PROPOSITION}
\newtheorem{quest}[thm]{PROBLEM}
 \theoremstyle{definition}
\theoremstyle{remark}
\newcommand{\tref}[1]{Theorem~\ref{#1}}
\newcommand{\cref}[1]{Corollary~\ref{#1}}
\newcommand{\pref}[1]{Proposition~\ref{#1}}
\newcommand{\R}{\mathbb{R}}
\begin{document}
\pagebreak


\title{On contractible orbifolds}

\author{Alexander Lytchak}
\address{A. Lytchak, Mathematisches Institut, Einsteinstrasse 62,
48149 M\"unster, Germany}
\email{lytchak\@@math.uni-bonn.de}

\subjclass[2010]{57R18}

\keywords{Contractible orbifold, classifying space}

\begin{abstract}
We prove that a contractible    orbifold is a manifold.
\end{abstract}

\thanks{ The author was supported   by a  Heisenberg grant of the DFG and by the  SFB  878
{\it Groups, Geometry and Actions}}

\maketitle
\renewcommand{\theequation}{\arabic{section}.\arabic{equation}}
\pagenumbering{arabic}


\section{Introduction}
Following \cite{Davis}, we call an orbifold $X$ \emph{contractible} if all of its orbifold homotopy groups $\pi_i ^{orb} (X), i\geq 1$ vanish.
We refer the reader to \cite{Davis} and the literature therein for basics about orbifolds. 
Davis has asked in  \cite{Davis}, whether any contractible orbifold $X$  must be developable. 
 In this note we answer this question affirmatively.

\begin{thm} \label{main}
Let $X$ be a smooth contractible orbifold. Then it is a manifold. 
\end{thm}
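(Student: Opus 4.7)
The plan is to use the classifying space $BX$ of the orbifold, together with its inertia groupoid, to convert nontrivial isotropy into a connectedness obstruction on the free loop space $L(BX)$.

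First I would present $X$ by an \'etale proper Lie groupoid $\mathcal{G}$---for instance the orthonormal frame bundle groupoid associated to any Riemannian metric on $X$---and form the classifying space $BX:=B\mathcal{G}$, the geometric realisation of its nerve. One has $\pi_i(BX)=\pi_i^{orb}(X)$ for all $i\ge 1$, so the hypothesis gives that $BX$ is weakly contractible and, being a CW complex, contractible in the strong sense.

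Next I would invoke the standard homotopy equivalence
\[
L(BX)\;\simeq\;B(\Lambda X),
\]
where $L(\cdot)$ denotes the free loop space and $\Lambda X$ is the inertia groupoid of $\mathcal{G}$: objects are pairs $(x,g)$ with $x\in X$ and $g$ in the isotropy group $\Gamma_x$, and morphisms are arrows of $\mathcal{G}$ intertwining the marked elements by conjugation. Since $BX$ is contractible so is its free loop space, and hence so is $B(\Lambda X)$; in particular the groupoid $\Lambda X$ has only one connected component.

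The last step is to read off the components of $\Lambda X$ directly. The identity sector $\{(x,1_x):x\in X\}$ is a subgroupoid equivalent to $X$ itself and so forms a single component ($X$ is automatically connected, being contractible). For any nontrivial $\gamma\in\Gamma_x$, a hypothetical morphism $(x,\gamma)\to(y,1_y)$ in $\Lambda X$ would be an arrow $h:x\to y$ of $\mathcal{G}$ satisfying $h\gamma=1_y\cdot h=h$, forcing $\gamma=1_x$---a contradiction. Thus each nontrivial isotropy element contributes an extra component, and connectedness of $\Lambda X$ forces every $\Gamma_x$ to be trivial: $X$ is a manifold.

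The main technical ingredient is the equivalence $L(BX)\simeq B(\Lambda X)$. For a discrete group $G$ it is the classical decomposition $LBG\simeq\coprod_{[g]}BC_G(g)$; its extension to orbifolds presented by \'etale proper Lie groupoids is standard in the literature on orbifold loop spaces and Chen--Ruan cohomology, and I would invoke it as a black box. Everything else in the argument is elementary.
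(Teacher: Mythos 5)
The argument has a fatal gap at its central step: the claimed equivalence $L(BX)\simeq B(\Lambda X)$ is false for orbifolds other than $[\mathrm{pt}/G]$, and it fails in exactly the way your proof needs it not to. What is true (Lupercio--Uribe) is that $B(\Lambda X)$ realises only the \emph{ghost} (constant/twisted-sector) loops and maps to $L(BX)\simeq B(L\mathcal{G})$, where $L\mathcal{G}$ is the full loop groupoid; already for a manifold $M$ with trivial isotropy one has $B(\Lambda M)=M$ while $LM$ is the honest free loop space, so the two sides disagree. More importantly, even the weak consequence you actually use --- that $\pi_0(B\Lambda X)\to\pi_0(L(BX))$ is injective, so that a nontrivial twisted sector forces $L(BX)$ to be disconnected --- is false. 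The natural map sends the component of $(x,\gamma)$ to the free homotopy class determined by the image of $\gamma\in\Gamma_x$ in $\pi_1(BX)=\pi_1^{orb}(X)$, so injectivity amounts to the local groups injecting (up to conjugacy) into $\pi_1^{orb}(X)$. For the teardrop $Z_p$ (the sphere with one cone point of order $p$) one has $\pi_1^{orb}(Z_p)=1$, so all $p$ components of $B(\Lambda Z_p)$ land in the single component of $L(BZ_p)$. Thus ``nontrivial isotropy $\Rightarrow$ $L(BX)$ disconnected'' is precisely the kind of statement whose failure characterises bad orbifolds; and for contractible $X$ (where $\pi_1^{orb}(X)=1$ forces $L(BX)$ connected) the instance of the lemma you need says verbatim that $\Lambda X$ is connected, i.e.\ that $X$ has no isotropy. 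Invoking it as a black box begs the question.

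For comparison, the paper's proof avoids loop spaces entirely: it realises $X$ as $M/SO(n)$ via the oriented orthonormal frame bundle, observes that contractibility of $\hat X=(M\times E)/G$ makes the orbit map $o_p\colon G\to M$ a homotopy equivalence, and then rules out a nontrivial finite stabiliser $G_p$ because $o_p$ would factor through the degree-$|G_p|$ covering $G\to G/G_p$, which cannot induce an isomorphism on top integral cohomology of $G$. If you want to rescue a loop-space approach, you must replace $\Lambda X$ by something that genuinely computes $L(BX)$, at which point the isotropy is no longer visible in $\pi_0$ and a finer (e.g.\ cohomological or transfer-type) argument is needed anyway.
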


\begin{proof}
Since $X$ is contractible, it is  it is orientable.
Let $n$ be the dimension of $X$. Define on $X$ a Riemannian metric and
let the smooth manifold  $M$ be the bundle of oriented orthonormal frames on  $X$ (cf. \cite{Hae}). Then $G=SO(n)$ acts effectively and almost freely 
on $M$ with  $X=M/G$. 
 
Let $E$ denote a contractible CW complex on which $G$ act freely, with quotient $E/G =BG$, the classifying space of $G$.
Then $\hat X=(M\times E ) /G$ is the \emph{classifying space of $X$} (cf. \cite{Hae}). By definition, the orbifold homotopy groups of $X$ are the usual homotopy groups of $\hat X$. Thus, by our assumption, the topological space $\hat X$ is contractible.  

The projection  $M\times E \to \hat  X$ is a homotopy fibration. Thus the contractibility of $\hat X$ implies that the embedding of any orbit of $G$ into $ M\times E$
is a homotopy equivalence between $G$ and $M\times E$.  Since $E$ is contractible, the projection $M\times E \to M$ is a homotopy equivalence as well.
Therefore, for any $p\in M$, the composition $o_p: G \to G\cdot p \to M$ given by orbit map $o_p (g):=  g \cdot p$ is a homotopy equivalence.

Assume now that $X$ is not a manifold. Then $G$ does not act freely on $M$. Thus, for some $p\in M$,  the stabilizer $G_p$ of $p$ is a finite non-trivial group. Then the orbit map $o_p: G\to M$ factors through the quotient map $\pi _p: G\to G/G_p$.
Since the orbit map is a homotopy equivalence, there must exist some map $i: G/G_p \to G$ such that $i \circ \pi  :G\to G$ is a homotopy equivalence. However, the manifolds $G$ and $G/G_p$ are orientable and the map $G\to G/G_p$ is a covering of degree $|G_p|$. Thus,
for $m=\dim (G) =n (n-1) /2$,
the image of $\pi _p ^{\ast}$ in $H^m (G, \mathbb Z) =\mathbb Z$ is a subgroup of $H^m (G, \mathbb  Z)$ of index 
$|G_p|$.
In particular, $(i\circ \pi ) ^{\ast}$ cannot be surjective. Contradiction. 
\end{proof}

We add the following not  surprising improvement:

\begin{prop} \label{dimension}
Let $X$ be a smooth $n$-dimensional orbifold. If the orbifold homotopy groups $\pi_i ^{orb} (X) $ are trivial, for all
$0\leq i \leq n$, then $X$ is a contractible manifold.
\end{prop}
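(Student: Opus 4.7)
The plan is to reduce \pref{dimension} to \tref{main} by first showing that $X$ must be a manifold; once this is established, the contractibility of the smooth $n$-manifold $X$ follows from standard tools. Retaining the setup from the proof of \tref{main}, let $M$ be the bundle of oriented orthonormal frames on $X$, let $G = SO(n)$ act almost freely on $M$ with $M/G = X$, and let $\hat X = (M \times E)/G$ be the orbifold classifying space, so that $\pi_i^{orb}(X) = \pi_i(\hat X)$. The hypothesis makes $\hat X$ an $n$-connected space. From the long exact sequence of the fibration $G \hookrightarrow M \times E \to \hat X$, together with the fact that $M \times E \simeq M$, the orbit map $o_p: G \to M$ is an $n$-equivalence for every $p \in M$: an isomorphism on $\pi_i$ for $i < n$ and a surjection on $\pi_n$.

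The crux is then to show that $G$ acts freely on $M$. Assume for contradiction that for some $p$ the stabilizer $G_p \subseteq G$ is a non-trivial finite group, so that $o_p$ factors through the finite cover $\pi_p: G \to G/G_p$ of degree $|G_p| > 1$. The main obstacle is that an $n$-equivalence is strictly weaker than a homotopy equivalence and no longer detects the top cohomology of $G$ where the contradiction of \tref{main} lived. To overcome this, pick a prime $\ell$ dividing $|G_p|$ and a cyclic subgroup $\mathbb{Z}/\ell \subseteq G_p$; it fixes $p$ and hence acts on $M$ with a non-empty fixed point set. Borel localization (Smith theory) applied to this $\mathbb{Z}/\ell$-action on $M$ forces $H^\ast_{\mathbb{Z}/\ell}(M; \mathbb{F}_\ell)$ to be non-trivial in infinitely many degrees. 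Using the restriction $H^\ast_G(M; \mathbb{F}_\ell) \to H^\ast_{\mathbb{Z}/\ell}(M; \mathbb{F}_\ell)$, combined with $H^i_G(M; \mathbb{F}_\ell) = H^i(\hat X; \mathbb{F}_\ell) = 0$ for $1 \leq i \leq n$, one aims to produce a non-vanishing class in $H^\ast(\hat X; \mathbb{F}_\ell)$ in some degree $\leq n$, contradicting $n$-connectedness of $\hat X$. Pinning down the exact degree in which this obstruction appears will be the main technical task; the control comes from the finite dimensionality of $M$ and the structure of the normal slice representation of $G_p$ at $p$.

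Once $X$ is known to be a manifold, $\hat X$ is homotopy equivalent to $X$, so $\pi_i(X) = 0$ for $i \leq n$. In particular $X$ is simply connected, and by Hurewicz $H_i(X; \mathbb{Z}) = 0$ for $1 \leq i \leq n$. If $H_n(X)$ were non-zero, then $X$ would be a closed orientable $n$-manifold and hence, being simply connected, would satisfy $\pi_n(X) \neq 0$ by Hurewicz again, contradicting the hypothesis. Therefore $H_\ast(X;\mathbb{Z}) = H_\ast(\pt)$ and Whitehead's theorem yields that $X$ is contractible.
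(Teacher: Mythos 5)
Your overall strategy (show $G$ acts freely, then deduce contractibility of the resulting $n$-connected $n$-manifold) matches the paper, your derivation that $o_p\colon G\to M$ is an $n$-equivalence is correct, and you rightly identify that this is too weak to run the top-degree covering argument of \tref{main}, since $\dim G=n(n-1)/2$ generally exceeds $n$. But the step you yourself flag as ``the main technical task'' is a genuine gap, and the Smith-theory route you sketch cannot close it as stated, because of a degree mismatch. Borel localization for the $\mathbb{Z}/\ell$-action on the finite-dimensional manifold $M$ produces non-vanishing classes in $H^*_{\mathbb{Z}/\ell}(M;\mathbb{F}_\ell)$ only in degrees above $\dim M=n+n(n-1)/2$ (that is where restriction to the fixed set becomes an isomorphism); it says nothing in degrees $\le n$. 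On the other side, the hypothesis only kills $H^i(\hat X;\mathbb{F}_\ell)$ for $i\le n$, and the Borel construction $M\times_{\mathbb{Z}/\ell}E$ fibres over $\hat X$ with compact fibre $G/(\mathbb{Z}/\ell)$; so in all degrees above $n$ the unknown cohomology of $\hat X$ is free to supply exactly the classes that localization demands, and no contradiction is forced into the range $\le n$. (Your scheme does yield a clean alternative proof of \tref{main} itself: there $\hat X$ is contractible, so $M\times_{G_p}E\simeq G/G_p$ has bounded $\mathbb{F}_\ell$-cohomology, contradicting localization. But that is precisely the case \pref{dimension} must go beyond.) The slice representation, which you mention as the hoped-for source of control, is never actually used.

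The paper's mechanism for bridging exactly this gap is different: it upgrades the $n$-equivalence to an honest left homotopy inverse. Namely, it considers the associated rank-$n$ vector bundle $(M\times E)\times_G\R^n\to\hat X$ and argues that, since the base $\hat X$ is $n$-connected and the rank is $n$, this bundle is trivial; hence its oriented frame bundle, which is the principal $SO(n)$-bundle $M\times E\to\hat X$, is trivial, so $M\times E\simeq\hat X\times G$ and the orbit inclusion $G\to M\times E\simeq M$ admits a map $i\colon M\to G$ with $i\circ o_p\simeq\mathrm{id}_G$. This is the one place where the numerical coincidence ``connectivity $=$ dimension $=$ rank'' enters, and once the retraction exists the covering-degree contradiction from \tref{main} applies verbatim to $i\circ o_p$. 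Your concluding paragraph (manifold plus $n$-connected implies contractible, via Hurewicz and Whitehead) is fine and agrees with the paper's terser final sentence.
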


\begin{proof}
Let $M, G$ and $\hat X= M\times E /G$ be as in the proof of \tref{main}.  Consider the induced vector bundle
$\hat Y= \hat X \times \R ^n /G$  over the classifying space $\hat X$.  By assumption, $\hat X$ is $n$-connected,
hence the vector bundle $\hat Y \to \hat X$ is a trivial vector bundle. But $M\times E$ is by construction of $\hat Y$
the space of oriented frames in $\hat Y$. Thus $M\times E$ is homotopy equivalent to $\hat X \times G$ as a  principal $G$-bundle.

 We deduce again, that the orbit map $o_p :G\to M$ used in the proof of \tref{main} admits an inverse
$i :M\to G$, such that $i \circ o_p $ is homotopy equivalent to the identity. Again, as above, this provides a contradiction  if
the stabilizer $G_p$ is not trivial. Therefore, $X$ is an $n$-dimensional  manifold.  Since  $X$ is $n$-connected,  it 
is contractible.
\end{proof}

I wonder whether  \pref{dimension} is optimal in high dimensions. In fact, I do not know a single example of a $4$-connected bad orbifold. I would like to finish the note by formulating two problems.

\begin{quest}
Do highly connected bad orbifolds exist?
\end{quest}

\begin{quest}
Does an analogue of \tref{main} hold true for non-smooth orbifolds? Does it hold true for etale groupoids of isometries?
\end{quest}

\noindent\textbf{Acknowledgements}  I am grateful to Mike Davis and Burkhard Wilking for helpful discussions.

\bibliographystyle{alpha}
\bibliography{controrbi}

\end{document}